\newtheorem{thm}[subsection]{Theorem}
\newtheorem{lemma}[subsection]{Lemma}
\newtheorem{pro}[subsection]{Proposition}
\theoremstyle{definition}
\newtheorem{defn}[subsection]{Definition}
\newtheorem{ex}[subsection]{Example}
\theoremstyle{remark}
\newtheorem{rk}[subsection]{Remark}
\numberwithin{equation}{section} \setcounter{tocdepth}{1}
\newcommand{\bea}{\begin{eqnarray}}
\newcommand{\eea}{\end{eqnarray}}
\newcommand{\R}{\mathbb{R}}
\DeclareMathOperator{\Orb}{Orb}
\def\R{\mathbb{R}}
\begin{document}
\title [Algebras of cubic matrices]
{Algebras of cubic matrices}

\author {M. Ladra, U.A. Rozikov}

\address{M.\ Ladra\\ Department of Algebra, University of Santiago de Compostela, 15782, Spain.}
 \email {manuel.ladra@usc.es}

 \address{U.\ A.\ Rozikov\\ Institute of mathematics, 29, Do'rmon Yo'li str., 100125,
Tashkent, Uzbekistan.} \email {rozikovu@yandex.ru}

\begin{abstract} We consider algebras of $m\times m\times m$-cubic matrices (with $m=1,2,\dots$).
Since there are several kinds of multiplications of cubic matrices,
one has to specify a multiplication first and then define an algebra of cubic matrices (ACM)
with respect to this multiplication. We mainly use the associative multiplications
 introduced by Maksimov.
Such a multiplication depends on an associative binary operation on the set of size $m$.
We introduce a notion of equivalent operations and show that such operations generate isomorphic ACMs.
It is shown that an ACM is not baric. An ACM is commutative iff $m=1$.
We introduce a notion of accompanying algebra (which is $m^2$-dimensional) and show that
there is a homomorphism from any ACM to the accompanying algebra. We describe (left and right) symmetric operations and give
left and right zero divisors of the corresponding ACMs. Moreover several subalgebras and ideals of an ACM are constructed.
\end{abstract}

\subjclass[2010] {16-XX}

\keywords{Cubic matrix, finite-dimensional algebra; commutative; associative.}

\maketitle

\section{Introduction}

Cubic matrices arise as a matrix of structural constants in a finite-dimensional algebra.
This cubic matrix generates an evolution quadratic operator in  dynamical systems
modeling in different disciplines, such as population dynamics \cite{Lyu}, physics \cite{UdRa}, economy \cite{Doh}.

In \cite{Eth1,Eth3} Etherington introduced the
formal language of abstract algebra to the study of genetics.
In recent years many
authors have tried to investigate algebras arising in
natural sciences (see for example \cite{Lyu,Reed,Tian,Wor}).

The algebra of quadratic (square) matrices is well studied (see for example \cite{Lam}).
To the best of our knowledge, there are few articles with some results on the algebras of cubic matrices.
In this paper we will give a systematic study of the algebras of cubic matrices.

The paper is organized as follows. Section~\ref{S:definitions}
is devoted to main definitions and remarks.  We define algebras
of $m\times m\times m$-cubic matrices (with $m=1,2,\dots$).
Since there are several kinds of multiplications of cubic matrices,
one has to specify a multiplication first and then define an algebra of cubic matrices (ACM)
with respect to this multiplication. In this paper we mainly use associative multiplications
which were introduced by Maksimov \cite{Mak}.
Such  a multiplication depends on an associative binary operation on the set of size $m$.
In Section~\ref{S:iso} we introduce a notion of equivalent operations and show that such operations generate isomorphic ACMs.
The result of Section~\ref{S:baric} is that an ACM is not baric.
In Section~\ref{S:accom} we introduce a notion of accompanying algebra
(which is $m^2$-dimensional) and show that there is a homomorphism
from any ACM to the accompanying algebra.
 In Section~\ref{S:comm} we show that an
ACM is commutative iff $m=1$, describe (left and right) symmetric operations and give
left and right zero divisors of the corresponding ACMs. Finally, Section~\ref{S:subal} is devoted
to the construction of subalgebras and ideals of an ACM.

\section{Definitions}\label{S:definitions}

Let $F$ be a field, and let $\mathcal A$ be a vector space over $F$ equipped with an additional
binary operation from $\mathcal A\times \mathcal A$ to $\mathcal A$, denoted
by $\cdot$ (we  will simply write $xy$ instead $x\cdot y$).  Then $\mathcal A$ is an \emph{algebra} over $F$ if the following identities
hold for every  $x, y$ and $z$ in $\mathcal A$, and every  $\lambda$ and $\mu$ in $F$:
 $(x + y)z = xz + yz$, $x(y + z) = xy + xz$, $(\lambda x)(\mu y) = (\lambda \mu) (xy)$.

For algebras over a field, the bilinear multiplication from $\mathcal A\times \mathcal A$ to $\mathcal A$,
is completely determined by the multiplication of the basis elements of $\mathcal A$. Conversely, once a basis for $\mathcal A$
 has been chosen, the products of the basis elements can be set arbitrarily, and then extended in a unique
 way to a bilinear operator on $\mathcal A$, i.e., the resulting multiplication satisfies the algebra laws.

A homomorphism between two algebras, $\mathcal A$ and $\mathcal B$, over a field $F$,
is a map $f \colon \mathcal A\rightarrow \mathcal B$ such that for all $\lambda \in F$ and $x,y\in\mathcal A$,
\[f(\lambda x) = \lambda f(x); \qquad  f(x + y) = f(x) + f(y); \qquad  f(xy) = f(x)f(y).\]
If $f$ is bijective then $f$ is said to be an isomorphism between $\mathcal A$ and $\mathcal B$.

Given a field $F$, any finite-dimensional algebra can be specified up
to isomorphism by giving its dimension (say $m$), and specifying $m^3$ structure constants $c_{ijk}$, which are scalars.
These structure constants determine the multiplication in $\mathcal A$ via the following rule:
\[{e}_{i} {e}_{j} = \sum_{k=1}^m c_{ijk} {e}_{k},\]
where $e_1,\dots,e_m$ form a basis of $\mathcal A$.

Thus the multiplication of a finite-dimensional algebra is given by a cubic matrix $(c_{ijk})$.
Following \cite{BLZ,Mak,Pan}
recall a notion of cubic matrix and different associative multiplication rules of cubic matrices:
a cubic matrix $Q=(q_{ijk})_{i,j,k=1}^m$ is a $m^3$-dimensional vector which can be uniquely written as
\[Q=\sum_{i,j,k=1}^m   q_{ijk}E_{ijk},\]
where $E_{ijk}$ denotes the cubic unit (basis) matrix, i.e. $E_{ijk}$ is a $m^3$- cubic matrix whose
$(i,j,k)$th entry is equal to 1 and all other entries are equal to 0.

Denote by $\mathfrak C$ the set of all cubic matrices over a field $F$. Then $\mathfrak C$ is an
$m^3$-dimensional vector space over $F$, i.e. for any matrices $A=(a_{ijk}), B=(b_{ijk})\in \mathfrak C$, $\lambda\in F$, we have
\[ A+B \coloneqq (a_{ijk}+b_{ijk})\in \mathfrak C, \qquad  \lambda A \coloneqq (\lambda a_{ijk})\in \mathfrak C. \]
Denote $I=\{1,2,\dots,m\}$.

Following  \cite{Mak} define the following multiplications for basis matrices $E_{ijk}$:
\begin{equation}\label{ma}
 E_{ijk}*_a E_{lnr}=\delta_{kl}E_{ia(j,n)r},
 \end{equation}
  where $a \colon   I\times I\to I$, $ (j,n) \mapsto a(j,n) \in I$,  is an arbitrary associative binary operation (see Remark \ref{re1} below for associative binary operations) and
  $\delta_{kl}$ is the Kronecker symbol.

  Denote by $\mathcal O_m$ the set of all associative binary operations on $I$.

 The general formula for the multiplication is the extension of \eqref{ma} by bilinearity, i.e.
 for any two cubic matrices $A=(a_{ijk}), B=(b_{ijk})\in \mathfrak C$
 the matrix $A*_a B=(c_{ijk})$ is defined by
 \begin{equation}\label{AB}
 c_{ijr}=\sum_{l,n: \, a(l,n)=j}\sum_k a_{ilk}b_{knr}.
 \end{equation}

 Denote by $\mathfrak C_a\equiv \mathfrak C_a^m=(\mathfrak C, *_a) $, $a\in\mathcal O_m$, the algebra of
 cubic matrices (ACM) given by the multiplication $*_a$.

 \begin{rk}Recently in \cite{BLZ} the authors provided 15 associative multiplication rules of cubic matrices,
 some of them coincide with Maksimov's multiplication rules. It was proved that between the 15 algebras
 corresponding to the multiplications there are 5 non-isomorphic associative algebras.
 In \cite{Pan} there are other kinds of multiplications for cubic stochastic matrices.

In general, one can fix an $m^3\times m^3\times m^3$- cubic matrix $\left(C_{ijk,lnr}^{uvw}\right)$ as a matrix of structural constants and   give a multiplication of basis cubic matrices as
\[E_{ijk}E_{lnr}=\sum_{uvw}C_{ijk,lnr}^{uvw}E_{uvw}.\]
Then the extension of this multiplication by bilinearity to arbitrary cubic matrices gives a general multiplication on the set $\mathfrak C$.
Under known conditions (see \cite{Jac}) on structural constants one can make this general ACM as a commutative or/and associative algebra.
\end{rk}
\begin{rk}\label{re1}   We note that the number of operations on $I$
 with $\lvert I \rvert=m$ is $m^{m^2}$.
 Let $\tau(m)$ be the number of distinct associative binary operations on a set of size $m$, i.e $\tau(m)=\lvert \mathcal O_m \rvert$.
 It is quickly increasing function of $m$ which can be seen from the following: (\cite{DiJo})\footnote{see also http://math.stackexchange.com/questions/105438/how-many-associative-binary-operations-there-are-on-a-finite-set}
\[
\begin{array}{c|cccccccc}
m&1&2&3&4&5&6&7&8\\\hline
\tau(m)& 1& 8& 113& 3492& 183732& 17061118& 7743056064& 148195347518186
\end{array}
\]
Thus the family of multiplications \eqref{ma} is rich, therefore in this paper we only consider multiplications of the form \eqref{ma}.
 \end{rk}
 \begin{rk} As it was mentioned above, there is one-to-one correspondence between the set $\mathbf A$ of all $n$-dimensional algebras
  (with a fixed basis) and the set of all cubic matrices (of size $n^3$, defined by structural constants). This correspondence can be used to consider $\mathbf A$ as an algebra (of algebras), i.e.
   for any algebra $U\in\mathbf A$ with matrix of structural constants $A=(a_{ijk})$, and  $V\in \mathbf A $ with matrix of structural constants $B=(b_{ijk})$,
    we define $U+V$ as an algebra with matrix of structural constants $A+B$. For $\lambda\in F$ we define the algebra $\lambda U$ with matrix of structural constants $\lambda A$.
Fix a multiplication of cubic matrices, say $\star$,  and define multiplication $U\star V$ to be an algebra
with matrix of structural constants $A\star B$. Thus $(\mathbf A, \star)$ is an algebra of algebras. Some properties of the ACM (with multiplication $ \star$) can be related to properties of $\mathbf A$.
 \end{rk}

\section{Isomorphic ACMs} \label{S:iso}

Let $S_m$ be the group of permutations on $I$.

Take $a\in \mathcal O_m$ and define an action of $\pi\in S_m$ on $a$ (denoted by $\pi a$) as
\[\pi a(i,j)=\pi a(\pi^{-1}(i),\pi^{-1}(j)), \qquad \text{for all} \ \ i,j\in I.\]

\begin{lemma}\label{l1} For any $a\in \mathcal O_m$ and any $\pi \in S_m$ we have $\pi a\in \mathcal O_m$.
\end{lemma}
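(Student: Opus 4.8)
The plan is to check directly that $b \coloneqq \pi a$ obeys the associative law. First note that $b$ is manifestly a well-defined map $I\times I \to I$, since $\pi$ and $\pi^{-1}$ permute $I$; thus the only thing to prove is the identity $b(b(i,j),k) = b(i,b(j,k))$ for all $i,j,k \in I$.

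I would set $x = \pi^{-1}(i)$, $y = \pi^{-1}(j)$, $z = \pi^{-1}(k)$, so that by the definition of the action $b(i,j) = \pi\big(a(x,y)\big)$, and similarly $b(j,k) = \pi\big(a(y,z)\big)$. The elementary fact to exploit repeatedly is $\pi^{-1}(\pi(u)) = u$ for every $u \in I$. Expanding the left-hand side,
\[
b(b(i,j),k) = \pi\Big(a\big(\pi^{-1}(b(i,j)),\, \pi^{-1}(k)\big)\Big) = \pi\big(a(a(x,y),\, z)\big),
\]
where the second equality uses $\pi^{-1}(b(i,j)) = \pi^{-1}\big(\pi(a(x,y))\big) = a(x,y)$. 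By the symmetric computation, $b(i,b(j,k)) = \pi\big(a(x,\, a(y,z))\big)$. Since $a \in \mathcal O_m$ is associative we have $a(a(x,y),z) = a(x,a(y,z))$, and applying $\pi$ to both sides yields $b(b(i,j),k) = b(i,b(j,k))$. Hence $\pi a$ is associative, i.e.\ $\pi a \in \mathcal O_m$.

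There is essentially no obstacle here: the argument is just the bookkeeping of cancelling $\pi^{-1}\pi$ inside the nested expressions, and the only point requiring care is the order of composition in the definition $\pi a(i,j) = \pi\big(a(\pi^{-1}(i),\pi^{-1}(j))\big)$. As a complement it is also worth recording that $\mathrm{id}\, a = a$ and $(\pi\rho)a = \pi(\rho a)$, so that $S_m$ genuinely acts on $\mathcal O_m$; this will be convenient for the isomorphism statements in Section~\ref{S:iso}.
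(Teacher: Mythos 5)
Your proof is correct and follows essentially the same route as the paper: expand both sides of the associativity identity for $\pi a$, cancel the $\pi^{-1}\pi$ pairs, and invoke associativity of $a$. Your version with the substitution $x=\pi^{-1}(i)$, $y=\pi^{-1}(j)$, $z=\pi^{-1}(k)$ is arguably cleaner bookkeeping than the paper's, and the closing remark that $(\pi\rho)a=\pi(\rho a)$ is a worthwhile addition, but the mathematical content is the same.
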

\begin{proof} It suffices to show that $\pi a$ is an associative operation, i.e.
\begin{equation}\label{ass}
\pi a(\pi a(i,j),k)=\pi a(i,\pi a(j,k)), \quad  \text{for all} \ \ i,j,k\in I.
\end{equation}
For LHS of this equality we have
\[ \pi a(\pi a(i,j),k)=\pi a\Big(\pi \big(a(\pi^{-1}(i),\pi^{-1}(j))\big),k\Big)=\pi a\Big(a\big(\pi^{-1}(i),\pi^{-1}(j)\big),\pi^{-1}(k)\Big).\]
For RHS of \eqref{ass} we get
\[ \pi a(i,\pi a(j,k))=\pi a\Big(i,\pi\big(a(\pi^{-1}(j),\pi^{-1}(k))\big)\Big)=\pi a\Big(\pi^{-1}(i),a\big(\pi^{-1}(j),\pi^{-1}(k)\big)\Big).\]
Since $a$ is associative and $\pi$ is one-to-one we have
\[ \pi a\Big(a\big(\pi^{-1}(i),\pi^{-1}(j)\big),\pi^{-1}(k)\Big) =\pi a\Big(\pi^{-1}(i),a\big(\pi^{-1}(j),\pi^{-1}(k)\big)\Big),\]
i.e. \eqref{ass} holds.
\end{proof}

So, we have an action of the symmetric group, $S_m$, on $\mathcal O_m$, the set of all associative binary operations on $I$.

The \emph{orbit} of $a\in \mathcal O_m$ under the action of $S_m$ is
$ \Orb(a)=\left\{\pi a: \pi\in S_m\right\}$.
Note that the set of orbits of (points $a\in \mathcal O_m$) under the action of $S_m$
 form a partition of $\mathcal O_m$. The associated equivalence relation is
 defined by saying $a\sim b$ if and only if there exists a $\pi\in S_m$ with $\pi a=b$.
 The orbits are then the equivalence classes under this relation;
 two operations $a$ and $b$ are equivalent if and only if their orbits are the same.

The following theorem gives a sufficient condition for the
isomorphness of ACM.

\begin{thm}\label{t1} If $a\sim b$, i.e. there exists a permutation $\pi \in S_m$ such that $\pi b=a$,  that is
\begin{equation}\label{abp}
a(j,n)=\pi^{-1} (b(\pi(j),\pi(n))), \qquad \text{for all} \ \ j,n\in I,
\end{equation}
 then the algebras $\mathfrak C_a$ and $\mathfrak C_b$ are isomorphic.
\end{thm}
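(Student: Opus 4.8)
The plan is to write down an explicit linear bijection $f\colon \mathfrak C_a\to\mathfrak C_b$ and check that it is multiplicative. In the product rule \eqref{ma} the operation acts only on the middle index, while the first and last indices are merely carried along and the Kronecker factor only compares the third index of the left basis matrix with the first index of the right one; so the natural candidate is the map that permutes only the middle index. Concretely, I would define $f$ on the basis by
\[ f(E_{ijk}) = E_{i\,\pi(j)\,k}, \qquad i,j,k\in I, \]
and extend it by linearity to all of $\mathfrak C$. Since $\pi\in S_m$ is a bijection of $I$, the assignment $E_{ijk}\mapsto E_{i\,\pi^{-1}(j)\,k}$ provides a two-sided inverse, so $f$ is an isomorphism of the underlying vector spaces of $\mathfrak C_a$ and $\mathfrak C_b$; it remains only to verify that $f(A*_aB)=f(A)*_bf(B)$ for all $A,B\in\mathfrak C$.

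By bilinearity of both multiplications and linearity of $f$ (writing $A=\sum a_{ijk}E_{ijk}$, $B=\sum b_{lnr}E_{lnr}$ and distributing), it suffices to check this identity on basis matrices $E_{ijk}$ and $E_{lnr}$. On one side, using \eqref{ma} for $*_a$ and then applying $f$,
\[ f(E_{ijk}*_aE_{lnr}) = f\big(\delta_{kl}E_{i\,a(j,n)\,r}\big) = \delta_{kl}\,E_{i\,\pi(a(j,n))\,r}. \]
On the other side, using \eqref{ma} for $*_b$,
\[ f(E_{ijk})*_bf(E_{lnr}) = E_{i\,\pi(j)\,k}*_bE_{l\,\pi(n)\,r} = \delta_{kl}\,E_{i\,b(\pi(j),\pi(n))\,r}. \]
These two expressions coincide for all $i,j,k,l,n,r\in I$ precisely because hypothesis \eqref{abp} gives $\pi(a(j,n))=b(\pi(j),\pi(n))$ for all $j,n\in I$. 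Hence $f$ preserves products of basis matrices, and therefore, by the reduction above, preserves products of arbitrary cubic matrices, so $f\colon\mathfrak C_a\to\mathfrak C_b$ is an algebra isomorphism.

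There is no real obstacle in this argument; the only point that requires a moment's thought is guessing the right map. Note that permuting all three indices by $\pi$, i.e.\ $E_{ijk}\mapsto E_{\pi(i)\,\pi(j)\,\pi(k)}$, would work equally well, since $\delta_{\pi(k)\pi(l)}=\delta_{kl}$ by injectivity of $\pi$; permuting only the middle index is simply the cleanest choice and reduces the verification to the two one-line computations above. The genuine content of the theorem is the algebraic fact, already encoded in \eqref{abp} via Lemma~\ref{l1} and the $S_m$-action, that conjugating the associative operation $a$ by $\pi$ produces the associative operation $b$.
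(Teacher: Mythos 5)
Your proof is correct, and it follows the same overall strategy as the paper: define a linear map on the basis matrices via the permutation $\pi$, reduce multiplicativity to basis elements by bilinearity, and close the verification with the identity $\pi(a(j,n))=b(\pi(j),\pi(n))$ coming from \eqref{abp}. The only difference is the choice of isomorphism: the paper takes $f(E_{ijk})=E_{\pi(i)\pi(j)\pi(k)}$, permuting all three indices, which forces the extra (easy) observation that $\delta_{kl}=\delta_{\pi(k)\pi(l)}$ because $\pi$ is injective; your map $f(E_{ijk})=E_{i\,\pi(j)\,k}$ touches only the middle index, so the Kronecker factor $\delta_{kl}$ appears identically on both sides and that step disappears, making the computation marginally shorter. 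You also spell out two points the paper leaves implicit, namely that $E_{ijk}\mapsto E_{i\,\pi^{-1}(j)\,k}$ is a two-sided inverse (so $f$ is a linear bijection) and that checking the product identity on basis matrices suffices; and your closing remark correctly notes that the paper's full-permutation map works just as well. Both approaches buy the same theorem; yours is a slightly leaner bookkeeping of the same idea.
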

\begin{proof} By Lemma~\ref{l1} it follows that if $a\in \mathcal O_m$ and $b\sim a$ then $b\in \mathcal O_m$. Consider $f(E_{ijk})=E_{\pi(i)\pi(j)\pi(k)}$ and  check
  \[ f(E_{ijk}*_a E_{lnr})=f(E_{ijk})*_b f(E_{lnr}).\]
  This equality is equivalent to
  \begin{equation}\label{EE}
  \delta_{kl}E_{\pi(i)\pi(a(j,n))\pi(r)}=\delta_{\pi(k)\pi(l)}E_{\pi(i)b(\pi(j),\pi(n))\pi(r)}.
  \end{equation}
  Since $\pi$ is one-to-one map on $I$, we have $\delta_{kl}=\delta_{\pi(k)\pi(l)}$ for any $k,l$.
  Consequently, if $\delta_{kl}=0$, i.e., $k\ne l$ then \eqref{EE} holds.
If $\delta_{kl}=1$, i.e $k=l$ then \eqref{EE} is reduced to the form
  \[
  E_{\pi(i)\pi(a(j,n))\pi(r)}=E_{\pi(i)b(\pi(j),\pi(n))\pi(r)}.
  \]
  This is true because of condition \eqref{abp}.
\end{proof}
\begin{rk} In \cite{Mak} it was proved that the algebra $\mathfrak C_a$ is associative for each $a\in \mathcal O_m$ and isomorphic to the tensor multiplication of the algebra of
square matrices and the semigroup algebra with respect to the basis $I$ and operation $a$.
\end{rk}

\begin{defn}  An operation $a\in \mathcal O_m$ is called \emph{symmetric} if
its orbit under the action of $S_m$ consists only of $a$ itself, $\Orb(a)=\{a\}$, i.e.
\[
a(j,n)=\pi(a(\pi^{-1}(j),\pi^{-1}(n))), \qquad \text{for all} \ j,n\in I, \ \ \text{for all} \ \pi\in S_m.
\]
(Equivalently, $a$ is fixed point for every $\pi\in S_m$)
\end{defn}
\begin{pro}\label{p1} An operation $a\in \mathcal O_m$ is symmetric if and only if
it is one of the following operation:
\begin{align}
a(i,j)&=j, \qquad \text{for all}  \ \  i,j\in I, \tag{rs}  \label{rs} \\
a(i,j)&=i, \qquad \text{for all}  \ \  i,j\in I. \tag{ls} \label{ls}
\end{align}
\end{pro}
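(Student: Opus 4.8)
The plan is to show the two implications separately. The easy direction is that each of \eqref{rs} and \eqref{ls} is symmetric: if $a(i,j)=j$ for all $i,j$, then for any $\pi\in S_m$ we have $\pi(a(\pi^{-1}(i),\pi^{-1}(j)))=\pi(\pi^{-1}(j))=j=a(i,j)$, and dually for \eqref{ls}. (One should also remark that both operations are indeed associative, so they lie in $\mathcal O_m$ — this is immediate.) The substantive direction is the converse: assuming $\Orb(a)=\{a\}$, i.e. $\pi(a(\pi^{-1}(i),\pi^{-1}(j)))=a(i,j)$ for all $i,j$ and all $\pi$, one must deduce that $a$ is a left projection or a right projection.

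For the converse, the key idea is to exploit the permutations that fix very little. Fix $i,j\in I$ and set $k=a(i,j)$. The invariance condition, rewritten, says: for every $\pi\in S_m$, $a(\pi(i),\pi(j))=\pi(k)=\pi(a(i,j))$. First I would run through the transpositions $\pi=(\ell\,\ell')$ that fix both $i$ and $j$ (possible whenever $m\ge 3$ relative to the pair, or handled directly when $i=j$): such a $\pi$ forces $a(i,j)=\pi(a(i,j))$, hence $k=a(i,j)$ must be a fixed point of every such transposition, which forces $k\in\{i,j\}$. Thus for all $i,j$ we have $a(i,j)\in\{i,j\}$. Next I would show the choice is uniform. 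Suppose for contradiction that there exist pairs with $a(i,j)=i$ "of left type" and $a(i',j')=j'$ "of right type" both genuinely (i.e. with $i\ne j$ and $i'\ne j'$). Using a suitable permutation to move one pair onto a configuration overlapping the other — concretely, pick $\pi$ carrying $(i,j)$ to a pair sharing exactly one coordinate with $(i',j')$, then apply the equivariance $a(\pi(i),\pi(j))=\pi(a(i,j))$ — I would derive that at a common pair the operation must simultaneously select its first and its second coordinate, a contradiction once those coordinates are distinct. Hence either $a(i,j)=i$ for all $i\ne j$, or $a(i,j)=j$ for all $i\ne j$; combined with $a(i,i)=i$ (forced already), this gives exactly \eqref{ls} or \eqref{rs}.

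A cleaner route to the uniformity step, which I would probably adopt, is to use associativity together with the value pattern $a(i,j)\in\{i,j\}$: if $a(i,j)=i$ but $a(j,k)=k$ for some triple of distinct elements, then $a(a(i,j),k)=a(i,k)$ while $a(i,a(j,k))=a(i,k)$ — consistent — so I would instead test $a(a(i,j),j)$ against $a(i,a(j,j))$ and similar short associativity words to pin down the pattern locally, then propagate it by equivariance since $S_m$ acts transitively on ordered pairs of distinct elements. The case $m=1$ and $m=2$ should be dispatched by direct inspection (for $m=1$ both operations coincide; for $m=2$ one lists the associative operations and checks which are $S_2$-invariant).

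The main obstacle I anticipate is the uniformity argument: getting from "pointwise $a(i,j)\in\{i,j\}$" to "globally left or globally right" without case-splitting explosions. The transitivity of the $S_m$-action on ordered pairs of distinct elements is the crucial leverage — it reduces the problem to a single consistency check at one overlapping configuration — so the real work is choosing the right permutation and the right length-two associativity identity to expose the contradiction. Edge cases with small $m$ and with $i=j$ need separate, but routine, handling.
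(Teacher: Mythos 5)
Your sufficiency direction and your overall skeleton (use stabilizer permutations to force $a(i,j)\in\{i,j\}$, then rule out mixing of left/right type) match the paper's strategy, but there is a genuine gap exactly where the paper has to work hardest: the case $m=3$ with $i\neq j$ and $k=a(i,j)$ equal to the third element. Your step claims that transpositions fixing both $i$ and $j$ force $k\in\{i,j\}$ and that such transpositions exist ``whenever $m\ge 3$''; in fact for $m=3$ and $i\neq j$ the pointwise stabilizer of $\{i,j\}$ in $S_3$ is trivial, so this argument produces no constraint at all. Worse, the constraint cannot be recovered from equivariance alone: the magma on $\{1,2,3\}$ defined by $a(i,i)=i$ and $a(i,j)=\text{the third element}$ for $i\neq j$ \emph{is} fixed by every $\pi\in S_3$, so only associativity excludes it (e.g.\ $a(a(1,2),3)=3$ while $a(1,a(2,3))=1$). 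This is precisely what the paper does in its Subsubcase 2.1.1: assuming $a(1,2)=3$ and $a(1,1)=1$ (the diagonal having been settled first), associativity gives $x=a(1,3)=a(1,x)$, hence $x\neq 2$, and then the transposition $(2\,3)$, which fixes $1$, sends $a(1,3)$ to $\pi(a(1,2))=2\neq x$, contradicting symmetry. Your outline never brings associativity to bear at this stage (you only invoke it later, for uniformity), so as written the proof fails for $m=3$; you must either insert an argument of the paper's type or directly verify that no $S_3$-equivariant associative operation can take a value outside $\{i,j\}$.

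The remainder of your plan is sound and in one respect tidier than the paper: once $a(i,i)=i$ and $a(i,j)\in\{i,j\}$ are known, your observation that $S_m$ acts transitively on ordered pairs of distinct elements, so equivariance $a(\pi(i),\pi(j))=\pi(a(i,j))$ immediately propagates ``left type'' or ``right type'' from one pair to all pairs, gives the uniformity step in one line; the paper instead splits into subcases (excluding $a(i,j)=a(j,i)$ by the transposition $(i\,j)$ and then asserting the two remaining uniform patterns). Your ``cleaner route'' paragraph testing short associativity words is vaguer and unnecessary once the transitivity argument is in place. Also make sure the diagonal case $a(i,i)=i$ is argued for $m\geq 3$ (a permutation fixing $i$ and moving $a(i,i)$, as in the paper's Case 1) before you use it, and keep $m=1,2$ by inspection as you propose.
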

\begin{proof} {\sl Sufficiency}. Assume $a$ is \eqref{rs} (the case \eqref{ls} is similar). We show that
$a$ is symmetric. Take any $\pi \in S_m$.
Then for any $i,j\in I$ we have
\[\pi a(i,j)=\pi a(\pi^{-1}(i),\pi^{-1}(j))=\pi(\pi^{-1}(j))=j=a(i,j),\]
i.e. $a$ is fixed point for any $\pi$.

{\sl Necessity.} Assume $a$ is different from \eqref{rs} and \eqref{ls}. The assertion is trivial for $|I|=m=1$. Case $m=2$
discussed in Example \ref{e2a} below, which shows that proposition is true for $m=2$. Therefore we consider the case $m\geq 3$.
Then the following cases are possible for $a$ which is different from \eqref{rs} and \eqref{ls}:

{\bf Case 1:} There is $i\in I$ such that $a(i,i)=k$ with $k\ne i$.
Consider a permutation $\pi_0$ such that $\pi_0(i)=i$, $\pi_0(k)\ne k$.
Then
 \[\pi_0 a(i,i)=\pi_0 a(\pi_0^{-1}(i),\pi_0^{-1}(i))=\pi_0(k)\ne k=a(i,i),\]
i.e. $a$ is not fixed point of $\pi_0$.

{\bf Case 2:} There are $i,j\in I$ such that $i\ne j$ and $a(i, j) = k$. For $k$ we have several possibilities:

{\bf Subcase 2.1:} $k\ne i, j$.

{\it Subsubcase 2.1.1:} $m=3$. Without loss of generality assume $a(1,2)=3$. Using $a(1,1)=1$ and associativity of $a$
compute $a(1,3)$. We should have $a(a(1,1),3)=a(1,a(1,3)).$
This gives $a(1,3)=a(1,a(1,3))$. Denoting $x=a(1,3)$ we obtain $x=a(1,x)$.
From above mentioned equalities for $a$ we see that $x$ can be 1 or 3 but not 2, i.e. $x\ne 2$.
Define $\pi_2(1)=1$, $\pi_2(2)=3$, $\pi_2(3)=2$, then
\[\pi_2 a(1,3)=\pi_2 a(\pi_2^{-1}(1),\pi_2^{-1}(3))=\pi_2 (a(1,2))=\pi_2(3)=2\ne x=a(1,3),\]
i.e. $a$ is not fixed point of $\pi_2$.

{\it Subsubcase 2.1.2:} $m>3$. For $k\ne i,j$ we consider a
permutation $\tilde\pi=\pi_{ij}$ which satisfies $\tilde\pi(i)=i, \tilde\pi(j)=j$
and $\tilde\pi(k)\ne k$ then we have
\[\tilde\pi a(i,j)=\tilde\pi a(\tilde\pi^{-1}(i),\tilde\pi^{-1}(j))=\tilde \pi(k)\ne k=a(i,j),\]
i.e. $a$ is not fixed point of $\tilde\pi$.

{\bf Subcase 2.2:} $a(i,j)=k\in \{i,j\}$ for any $i$, $j$. After above mentioned cases remain the operations with property $a(i,j)\in \{i,j\}$ for any $i,j$, because if $a(i,j)\ne i,j$ for some $i,j$ then we come back to Subcase 2.1.

{\it Subsubcase 2.2.1:} $a(i,j)=a(j,i)=i$ for some $i\ne j$. (The case $a(i,j)=a(j,i)=j$ is similar.) Consider permutation $\pi_1$ with $\pi_1(i)=j$, $\pi_1(j)=i$ then
\[\pi_1 a(i,j)=\pi_1 a(\pi_1^{-1}(i),\pi_1^{-1}(j))=\pi_1a(j,i)=\pi_1(i)=j\ne i=a(i,j).\]

{\it Subsubcase 2.2.2:} $a(i,j)=i$, $a(j,i)=j$ for any $i,j$. In this case $a$ is \eqref{ls}.

{\it Subsubcase 2.2.3:} $a(i,j)=j$, $a(j,i)=i$ for any $i,j$. In this case $a$ is \eqref{rs}.
This completes the proof.
\end{proof}
\begin{defn}
The symmetric operation \eqref{rs} (resp. \eqref{ls}) mentioned in Proposition~\ref{p1} is called \emph{right-symmetric} (resp. \emph{left-symmetric}).
\end{defn}
\begin{ex}\label{e2a}\hfill

 1. Consider the case $I=\{1,2\}$, then the following 8 operations are
associative (\cite{DiJo}):
\[\begin{array}{cccc}
I& \vline&1&2\\
\hline
1&\vline&1&1\\
2&\vline&1&1
\end{array}\qquad
\begin{array}{cccc}
II& \vline&1&2\\
\hline
1&\vline&1&1\\
2&\vline&1&2
\end{array}\qquad
\begin{array}{cccc}
III& \vline&1&2\\
\hline
1&\vline&1&1\\
2&\vline&2&2
\end{array} \qquad
\begin{array}{cccc}
IV& \vline&1&2\\
\hline
1&\vline&1&2\\
2&\vline&1&2
\end{array}
\]

\,

\[\begin{array}{cccc}
V& \vline&1&2\\
\hline
1&\vline&1&2\\
2&\vline&2&1
\end{array}\qquad
\begin{array}{cccc}
VI& \vline&1&2\\
\hline
1&\vline&1&2\\
2&\vline&2&2
\end{array}\qquad
\begin{array}{cccc}
VII& \vline&1&2\\
\hline
1&\vline&2&1\\
2&\vline&1&2
\end{array} \qquad
\begin{array}{cccc}
VIII& \vline&1&2\\
\hline
1&\vline&2&2\\
2&\vline&2&2
\end{array}
\]

One can check that $I\sim VIII$, $II\sim VI$, $V\sim VII$. Therefore these operations are not symmetric. By Theorem~\ref{t1} the ACMs corresponding to equivalent operations are isomorphic. Operations $III$ and $IV$ are symmetric.

2. In the case $I=\{1,2,3\}$, the set $\mathcal O_3$ of associative operations contains 113 elements (see \cite{DiJo}), here
some examples
 \[\begin{array}{ccccc}
i& \vline&1&2&3\\
\hline
1&\vline&1&2&3\\
2&\vline&2&2&2\\
3&\vline&3&2&2
\end{array}\qquad
\begin{array}{ccccc}
ii& \vline&1&2&3\\
\hline
1&\vline&1&2&3\\
2&\vline&2&3&3\\
3&\vline&3&3&3
\end{array}\qquad
\begin{array}{ccccc}
iii& \vline&1&2&3\\
\hline
1&\vline&1&1&1\\
2&\vline&1&2&3\\
3&\vline&1&3&1
\end{array} \qquad
\begin{array}{ccccc}
iv& \vline&1&2&3\\
\hline
1&\vline&2&2&1\\
2&\vline&2&2&2\\
3&\vline&1&2&3
\end{array}\]

\,

 \[
\begin{array}{ccccc}
v& \vline&1&2&3\\
\hline
1&\vline&1&1&1\\
2&\vline&1&1&2\\
3&\vline&1&2&3
\end{array}\qquad
\begin{array}{ccccc}
vi& \vline&1&2&3\\
\hline
1&\vline&3&1&3\\
2&\vline&1&2&3\\
3&\vline&3&3&3
\end{array}\qquad
\begin{array}{ccccc}
vii& \vline&1&2&3\\
\hline
1&\vline&1&2&3\\
2&\vline&1&2&3\\
3&\vline&1&2&3
\end{array} \qquad
\begin{array}{ccccc}
viii& \vline&1&2&3\\
\hline
1&\vline&1&1&1\\
2&\vline&2&2&2\\
3&\vline&3&3&3
\end{array}
\]
It is easy to check that
$\Orb(i)=\{i,ii,iii,iv,v,vi\}$, consequently by Theorem~\ref{t1} these six operations define isomorphic ACMs.
By Proposition~\ref{p1} we know that the operations $vii$ and $viii$ are symmetric.
\end{ex}

\section{Each ACM is not baric} \label{S:baric}

A \emph{character} $\chi$ for an algebra $\mathcal A$ is a nonzero multiplicative
linear form on $\mathcal A$, that is, a nonzero algebra homomorphism $\chi \colon \mathcal A\to \R$ (see for example, \cite[page 73]{Lyu}).

 A pair $(\mathcal A, \chi)$ consisting of an algebra $\mathcal A$ and a character
$\chi$ on $\mathcal A$ is called a \emph{baric algebra}.

\begin{thm}\label{t2} Each ACM, $\mathfrak C_a$, $a\in \mathcal O_m$, $m\geq 2$, is not baric.
\end{thm}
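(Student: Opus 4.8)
The plan is to suppose, for contradiction, that some $\mathfrak C_a$ with $m\geq 2$ admits a character $\chi\colon \mathfrak C_a\to\R$, and to exploit the highly idempotent structure of the basis matrices $E_{ijk}$ to force $\chi$ to vanish on enough of $\mathfrak C_a$ that it must be the zero map. First I would record the multiplicative constraints coming from $\chi(E_{ijk}*_a E_{lnr})=\chi(E_{ijk})\chi(E_{lnr})$ together with $\eqref{ma}$: for $k\neq l$ the left side is $0$, so $\chi(E_{ijk})\chi(E_{lnr})=0$ whenever $k\neq l$. This already says that the scalars $\chi(E_{ijk})$ cannot all be nonzero; more precisely, if $\chi(E_{ijk})\neq 0$ for some triple, then $\chi(E_{lnr})=0$ for every triple whose first index is not $k$ — wait, the constraint is on the \emph{third} index of the first factor versus the \emph{first} index of the second. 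So: if $\chi(E_{ijk})\neq 0$ for some $(i,j,k)$, then taking $(l,n,r)$ with $l\neq k$ forces $\chi(E_{lnr})=0$; hence $\chi(E_{lnr})$ can be nonzero only when $l=k$. By symmetry (varying which factor we put first), one shows all triples with nonzero $\chi$-value must share a common first index \emph{and} there is a rigidity in the third index as well.

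Next I would pin down a diagonal-type relation. For a basis matrix with $k=i$ (so that $E_{iji}$ is "composable with itself"), $\eqref{ma}$ gives $E_{iji}*_a E_{iji}=E_{i\,a(j,j)\,i}$, so $\chi(E_{iji})^2=\chi(E_{i\,a(j,j)\,i})$. If $j$ is an idempotent for $a$ (i.e.\ $a(j,j)=j$ — at least one such $j$ exists since $a$ is associative on a finite set, by the standard semigroup argument that some power is idempotent), then $\chi(E_{iji})=\chi(E_{iji})^2$, so $\chi(E_{iji})\in\{0,1\}$. The key point is then to show that the "common first index" obtained above actually collapses everything: combining the $k\neq l$ killing rule with the requirement that $\chi$ be nonzero somewhere, one deduces $\chi$ is supported (among basis matrices) on a single index value $p$, i.e.\ $\chi(E_{ijk})=0$ unless $i=k=p$; and then for two such matrices $E_{pjp}, E_{pj'p}$ we get $\chi(E_{pjp})\chi(E_{pj'p})=\chi(E_{p\,a(j,j')\,p})$, again forcing the nonzero values to be $1$ and $a(j,j')=$ (something in the support). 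Since $m\geq 2$, I expect a parity/counting contradiction: the nonzero $\chi$-values sum over the $m$ choices of middle index in a way incompatible with all being $0$ or $1$ while respecting $\eqref{AB}$ applied to the all-ones combinations — concretely, test $\chi$ on $\sum_j E_{pjp}$ and use $\eqref{AB}$ to get a quadratic relation in $m$ unknowns from $\{0,1\}$ that has no solution making $\chi$ nonzero.

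The main obstacle I anticipate is the bookkeeping in the step that forces the support of $\chi$ down to triples of the form $(p,\cdot,p)$: the constraint $\chi(E_{ijk})\chi(E_{lnr})=0$ for $k\neq l$ only directly links the \emph{third} index of one factor to the \emph{first} index of the other, so ruling out, say, $\chi(E_{ijk})\neq 0$ with $i\neq k$ requires a small argument (multiply $E_{ijk}$ on the right by something in its own support, or use an idempotent middle index to iterate the squaring relation and compare first/third indices). Once the support is reduced to $\{E_{pjp}: j\in I\}$, the operation $a$ restricted there behaves like the semigroup $(I,a)$ and the remaining contradiction is a short finite computation; I would handle the two symmetric operations $\eqref{rs}$ and $\eqref{ls}$ explicitly as a sanity check (for $\eqref{rs}$, $E_{pjp}*_aE_{pj'p}=E_{pj'p}$, so $\chi(E_{pjp})\chi(E_{pj'p})=\chi(E_{pj'p})$ forces $\chi(E_{pjp})=1$ for all $j$, and then testing on $\sum_j E_{pjp}$ versus its square via $\eqref{AB}$ yields $m\cdot 1 = m^2\cdot 1$-type incompatibility when $m\geq 2$), and then note the general case follows either by the same support argument or, where convenient, by reducing along the isomorphisms of Theorem~\ref{t1}.
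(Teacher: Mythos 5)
Your opening moves are essentially the paper's: from $\chi(E_{ijk})\chi(E_{lnr})=\delta_{kl}\,\chi(E_{i\,a(j,n)\,r})$ one does reduce the support of $\chi$ to matrices $E_{pjp}$ with a single fixed $p$ (the "small argument" you worry about for $i\neq k$ is just multiplying $E_{ijk}$ by itself: then the third index of the first factor differs from the first index of the second, so $\chi(E_{ijk})^2=0$). The genuine gap is in your closing step. Testing $\chi$ on $X=\sum_j E_{pjp}$ cannot yield anything new: a linear map that is multiplicative on all products of basis elements is automatically multiplicative on all linear combinations, so applying \eqref{AB} to sums only reproduces the basis relations. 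Your sanity check is in fact a miscalculation: for the right-symmetric operation, with $\chi(E_{pjp})=1$ for all $j$, one has $\chi(X)=m$ and $X*_aX=mX$, so the relation reads $m^2=m^2$ --- no incompatibility. More fundamentally, the constraints you have assembled (support contained in $\{E_{pjp}:j\in I\}$, values in $\{0,1\}$, and $\chi(E_{pjp})\chi(E_{pnp})=\chi(E_{p\,a(j,n)\,p})$) are mutually consistent: the span of $\{E_{pjp}:j\in I\}$ is a subalgebra isomorphic to the semigroup algebra of $(I,a)$, and taking all these values equal to $1$ satisfies every such relation. Hence no counting or parity argument confined to the support can finish the proof.

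What is missing is the cross relation in which one factor lies \emph{outside} the support but the product lands \emph{inside} it. For any $k\neq p$ (this is precisely where $m\geq 2$ enters), multiplicativity applied to $E_{pjk}*_aE_{knp}=E_{p\,a(j,n)\,p}$ gives $\chi(E_{pjk})\chi(E_{knp})=\chi(E_{p\,a(j,n)\,p})$; the left-hand side vanishes because $\chi(E_{pjk})=0$ (its first and third indices differ), so $\chi(E_{p\,a(j,n)\,p})=0$ for all $j,n\in I$. Combined with $\chi(E_{pj_0p})^2=\chi(E_{p\,a(j_0,j_0)\,p})$ this kills the last putative nonzero value; these are exactly the paper's steps \eqref{b2}--\eqref{b4}. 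With that one relation inserted your argument closes, but as written the proposal does not reach a contradiction.
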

\begin{proof}
Assume $\chi \colon \mathfrak C_a\to\R$ is a character. Then for each cubic matrix $X=(x_{ijk})\in \mathfrak C_a$
it has the form
 $\chi(X)=\sum_{i,j,k=1}^m \alpha_{ijk} x_{ijk}$. We shall prove that
$\chi(X)\equiv 0$.
Consider it on basis elements:
\[\chi(E_{ijk})=\alpha_{ijk}\in \R, \quad  i,j,k\in I.\]
We should check $\chi(E_{ijk}*_a E_{lnr})=\chi(E_{ijk})\chi(E_{lnr})$.
Using \eqref{ma} the last equality can be written as
\[
 \alpha_{ijk}\alpha_{lnr}=\delta_{kl}\alpha_{ia(j,n)r}, \qquad \text{for all}  \ \ i,j,k,l,n,r\in I.
\]
Equivalently,
\begin{align}
 \alpha_{ijk}\alpha_{lnr} & =0, \qquad  \qquad  \quad    \ \text{for all}  \ \  i,j,k\ne l,n,r\in I. \label{b0}  \\
 \alpha_{ijk}\alpha_{knr} & =\alpha_{ia(j,n)r},  \ \,  \qquad \text{for all}  \ \  i,j,k=l,n,r\in I. \label{b1}
\end{align}
In \eqref{b0} take $i\ne k$ and $l=i$, $n=j$, $r=k$, then we obtain
\begin{equation}\label{ink}
\alpha_{ijk}=0, \qquad  \quad \text{for all} \ \ i\ne k, j\in I.
\end{equation}
Consequently, non-zero coefficient can be only in the form $\alpha_{kjk}$.

Assume  $\alpha_{k_0j_0k_0}\ne 0$, for some $k=k_0$ and $j=j_0$.
Then from \eqref{b0} we get $\alpha_{lnl}=0$, for any $l\ne k_0$ and $n\in I$.
Using \eqref{ink}, from \eqref{b1} for $i=r=k_0$, $k\ne k_0$, we get
\begin{equation}\label{b2}
 0=\alpha_{k_0jk}\alpha_{knk_0}=\alpha_{k_0a(j,n)k_0}, \qquad  \text{for all} \ \   j,k\ne k_0,n\in I.
\end{equation}

Write \eqref{b1} for $i=r=k=k_0$:
\begin{equation}\label{b3}
 \alpha_{k_0jk_0}\alpha_{k_0nk_0}=\alpha_{k_0a(j,n)k_0}, \qquad \text{for all} \  \  j,n\in I.
\end{equation}
By \eqref{b3} for $j=n=j_0$ we get
 \begin{equation}\label{b4}
 \alpha_{k_0j_0k_0}^2=\alpha_{k_0a(j_0,j_0)k_0}.
\end{equation}
Using \eqref{b2}, for $j=n=j_0$ we obtain $\alpha_{k_0a(j_0,j_0)k_0}=0$.
Consequently from \eqref{b4} we get  $\alpha_{k_0j_0k_0}=0$. Thus $\chi(X)\equiv 0$.
\end{proof}

\section{Accompanying algebra of an ACM} \label{S:accom}

 By Theorem~\ref{t2} an ACM is not a baric
 algebra. This is similar to an algebra of bisexual population \cite{LaRo}.
 To overcome such complication, Etherington \cite{Eth3} for
 an algebra of sex linked inheritance introduced the idea of
 treating the male and female components of a population separately.
 In \cite{Hol2} Holgate formalized this concept by introducing sex
 differentiation algebras (which is two-dimensional) and a generalization of baric algebras called dibaric algebras.
 In this section we shall define an analogue of a sex differentiation algebra for an ACM,
 in our case it will be an $m^2$-dimensional algebra.

\begin{defn} Let $\mathfrak A=\langle \eta_{ij}:i,j\in I \rangle$ denote
an $m^2$-dimensional algebra with multiplication
table
\[\eta_{ij}\eta_{kl}=\delta_{jk}\eta_{il}.\]
Then  $\mathfrak A$ is called the \emph{accompanying algebra}.
\end{defn}
The following proposition is obvious.
\begin{pro}
The accompanying algebra $\mathfrak A$ is associative. It is commutative iff $m=1$.
\end{pro}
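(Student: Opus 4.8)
The plan is to verify directly from the multiplication table $\eta_{ij}\eta_{kl}=\delta_{jk}\eta_{il}$ that the two asserted properties hold. For associativity it suffices, since multiplication is bilinear, to check $(\eta_{ij}\eta_{kl})\eta_{pq}=\eta_{ij}(\eta_{kl}\eta_{pq})$ on all triples of basis elements. First I would compute the left-hand side: $(\eta_{ij}\eta_{kl})\eta_{pq}=\delta_{jk}\eta_{il}\eta_{pq}=\delta_{jk}\delta_{lp}\eta_{iq}$. Then the right-hand side: $\eta_{ij}(\eta_{kl}\eta_{pq})=\eta_{ij}\delta_{lp}\eta_{kq}=\delta_{lp}\delta_{jk}\eta_{iq}$. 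These agree, so $\mathfrak A$ is associative; this is the routine part. (One recognizes here the matrix-units relation, so $\mathfrak A$ is essentially the full matrix algebra $M_m(F)$, which makes associativity unsurprising.)

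For the commutativity claim I would argue both directions. If $m=1$, then $I=\{1\}$ and $\mathfrak A$ is one-dimensional spanned by $\eta_{11}$ with $\eta_{11}\eta_{11}=\eta_{11}$, which is trivially commutative. Conversely, if $m\geq 2$, I would exhibit two basis elements that fail to commute: take $i\neq j$ in $I$ and compare $\eta_{ij}\eta_{ji}=\delta_{jj}\eta_{ii}=\eta_{ii}$ with $\eta_{ji}\eta_{ij}=\delta_{ii}\eta_{jj}=\eta_{jj}$. Since $i\neq j$ the basis elements $\eta_{ii}$ and $\eta_{jj}$ are distinct, hence $\eta_{ij}\eta_{ji}\neq\eta_{ji}\eta_{ij}$, so $\mathfrak A$ is not commutative. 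Combining the two directions gives ``commutative iff $m=1$''.

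There is essentially no obstacle here — the statement is flagged as obvious in the paper, and the only thing to be a little careful about is the bilinearity reduction (that checking an identity on basis elements suffices to establish it for all elements), which is standard for finite-dimensional algebras as recalled in Section~\ref{S:definitions}. If one wanted to be maximally efficient one could simply note that $\mathfrak A\cong M_m(F)$ via $\eta_{ij}\mapsto E_{ij}$ (the ordinary matrix units) and invoke the well-known facts that $M_m(F)$ is associative and is commutative precisely when $m=1$; but the direct computation above is short enough that it is worth including for completeness.
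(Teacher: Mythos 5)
Your proof is correct; the paper itself gives no argument here, simply declaring the proposition obvious, and your direct verification on basis elements (equivalently, the identification $\eta_{ij}\mapsto E_{ij}$ with the matrix units of $M_m(F)$) is precisely the routine check the authors had in mind. Both the associativity computation and the non-commuting pair $\eta_{ij}\eta_{ji}=\eta_{ii}\neq\eta_{jj}=\eta_{ji}\eta_{ij}$ for $i\neq j$ are exactly right.
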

Now we define a generalization
of a baric and a dibaric algebra.

\begin{defn} An algebra is called accompanied if it admits a
homomorphism onto the accompanying algebra $\mathfrak A$.
\end{defn}

\begin{thm}\label{ta} For each $a\in \mathcal O_m$ the algebra $\mathfrak C_a$ is accompanied.
\end{thm}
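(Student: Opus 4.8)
The plan is to exhibit an explicit surjective homomorphism $f\colon \mathfrak C_a \to \mathfrak A$ by prescribing it on the basis cubic matrices $E_{ijk}$ and extending by linearity. Observing the multiplication rules $E_{ijk}*_a E_{lnr}=\delta_{kl}E_{ia(j,n)r}$ and $\eta_{ij}\eta_{kl}=\delta_{jk}\eta_{il}$, the natural guess is that only the first and last indices of $E_{ijk}$ should survive, since the Kronecker factor $\delta_{kl}$ matches $\delta_{jk}$ in $\mathfrak A$ once we send the third index of a cubic matrix to the first "slot" of $\mathfrak A$ and the first index to the "second slot" — more precisely I would try $f(E_{ijk})=\eta_{ki}$ (or, symmetrically, $f(E_{ijk})=\eta_{ik}$, after matching conventions), so that the middle index $j$, which is the one acted on by the operation $a$, is simply forgotten.

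First I would define $f$ on basis elements by $f(E_{ijk})=\eta_{ki}$ and extend linearly; surjectivity is immediate because every $\eta_{pq}$ equals $f(E_{q1p})$ (indeed $f(E_{qjp})$ for any $j$). Then I would verify multiplicativity on basis elements, which by bilinearity suffices: compute $f(E_{ijk}*_a E_{lnr})=f(\delta_{kl}E_{ia(j,n)r})=\delta_{kl}\eta_{ri}$, and compute $f(E_{ijk})f(E_{lnr})=\eta_{ki}\eta_{rl}=\delta_{ir}\,\eta_{kl}$. Here I need to be careful about index conventions: with the assignment $f(E_{ijk})=\eta_{ki}$ the two sides read $\delta_{kl}\eta_{ri}$ versus $\delta_{ir}\eta_{kl}$, which do not match, so I would instead take the assignment that does match. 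Trying $f(E_{ijk})=\eta_{ik}$ gives $f(E_{ijk}*_aE_{lnr})=\delta_{kl}\eta_{ir}$ and $f(E_{ijk})f(E_{lnr})=\eta_{ik}\eta_{lr}=\delta_{kl}\eta_{ir}$, so these agree identically; hence the correct choice is $f(E_{ijk})=\eta_{ik}$, dropping the middle index. I would present this computation cleanly.

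The remaining point is linearity (i.e. that $f$ respects addition and scalar multiplication), which holds by construction since $f$ is defined as the unique linear extension of a map on a basis — I would just state this. Thus $f$ is a surjective algebra homomorphism $\mathfrak C_a\to\mathfrak A$, so $\mathfrak C_a$ is accompanied.

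I do not expect a serious obstacle here; the only delicate spot is getting the index convention in the definition of $f$ to align the Kronecker deltas on the two sides of the multiplicativity identity — as shown above, forgetting the middle index and keeping the outer indices in the order $(i,k)$ works because the operation $a$ appears only in the middle slot, which is precisely the slot discarded by $f$, while the matching condition $\delta_{kl}$ in the cubic multiplication is carried faithfully onto the $\delta_{jk}$ condition in $\mathfrak A$. Once the right $f$ is written down, the proof is a two-line verification.
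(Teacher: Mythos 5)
Your proof is correct and is essentially the paper's own argument: the paper defines $\varphi(E_{inj})=\eta_{ij}$, i.e.\ exactly your map forgetting the middle index, checks multiplicativity on basis elements via the same two-line Kronecker-delta computation, and notes surjectivity. Only a cosmetic slip: your surjectivity remark $\eta_{pq}=f(E_{q1p})$ was written for the discarded convention and should read $\eta_{pq}=f(E_{p1q})$ (or $f(E_{pjq})$ for any $j$) once you fix $f(E_{ijk})=\eta_{ik}$.
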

\begin{proof} Consider a linear mapping $\varphi \colon \mathfrak C_a\to\mathfrak A$ defined on basis elements by
\[\varphi(E_{inj})=\eta_{ij}, \ \ i,n,j\in I.\]
On one hand we have
\[\varphi(E_{inj}*_aE_{krl})=\delta_{jk}\varphi(E_{ia(n,r)l})=\delta_{jk}\eta_{il}.\]
On the other hand we have
\[\varphi(E_{inj})\varphi(E_{krl})=\eta_{ij}\eta_{kl}=\delta_{jk}\eta_{il}.\]
Thus
\[\varphi(E_{inj}*_aE_{krl})=\varphi(E_{inj})\varphi(E_{krl}),\]
i.e. $\varphi$ generates a homomorphism.

For any $X=(x_{ijk})\in \mathfrak C_a$, i.e.
\[X= \sum_{inj}x_{inj}E_{inj}\] the homomorphism $\varphi$ is defined by
\[\varphi(X)=\sum_{inj}x_{inj}\varphi(E_{inj})=\sum_{ij}\left(\sum_{n}x_{inj}\right)\eta_{ij}.\]

For arbitrary $u=\sum_{ij}u_{ij}\eta_{ij}\in \mathfrak A$ it is
easy to see that $\varphi(X)=u$ if $\sum_{n}x_{inj}=u_{ij}$. Therefore $\varphi$ is onto.
\end{proof}
The accompanying algebra will be useful to study subalgebras of ACM, which we give in a next section.
\begin{rk} We note that baric algebras are useful to describe biological systems of free (one-sex) populations (see \cite{Lyu}).
 Dibaric algebras are related to systems of bisexual (two-sex) populations (see \cite{LaRo,Reed}).
  But in biology there are unusual systems: swordtail fish, the Chironomus midge species, the Platypus has 10
sex chromosomes but lacks the mammalian sex determining
gene SRY, meaning that the process
of sex determination in the Platypus remains
unknown. Zebra fish go through juvenile
hermaphroditism, but what triggers this is unknown.
The Platy fish has W, X, and Y chromosomes.
This allows WY, WX, or XX females or
YY and XY males \cite{Sch}. Such systems may have
``multiple sexes'' instead of having only two.
In such  systems the accompanying algebra will play the role of the sex
 differentiation algebra.  Then an ACM will play the role of an evolution algebra
 of such biological systems (see \cite{LaRo,Lyu,Reed,Tian,Var,Wor} for different kinds of evolution algebras).
\end{rk}
\section{Commutativity and solvability of equations in ACMs} \label{S:comm}

\begin{pro}
 Algebra $\mathfrak C_a$ is commutative for each $a\in \mathcal O_m$ iff $m=1$.
\end{pro}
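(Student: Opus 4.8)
The plan is to prove both implications directly from the multiplication rule \eqref{ma}. The direction $m=1$ implies commutativity is immediate: when $m=1$ there is a unique index triple $(1,1,1)$, the only associative operation is $a(1,1)=1$, and $\mathfrak C_a$ is the one-dimensional algebra spanned by $E_{111}$ with $E_{111}*_aE_{111}=E_{111}$; a one-dimensional (hence necessarily commutative) algebra.

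For the converse, I would argue by contraposition: assuming $m\geq 2$, I exhibit two basis cubic matrices whose products in the two orders differ. The natural candidates are the "identity-like" matrices that isolate the behavior of the $\delta_{kl}$ factor. Concretely, pick distinct indices, say $1\neq 2$ in $I$, and compute $E_{111}*_aE_{122}$ versus $E_{122}*_aE_{111}$ using \eqref{ma}. One gets $E_{111}*_aE_{122}=\delta_{11}E_{1\,a(1,2)\,2}=E_{1\,a(1,2)\,2}$, while $E_{122}*_aE_{111}=\delta_{21}E_{1\,a(2,1)\,1}=0$ since $2\neq 1$. Since one product is a nonzero basis element and the other is $0$, the algebra is not commutative. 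This works uniformly for every $a\in\mathcal O_m$, because the argument only uses the Kronecker factor and never the specific values of $a$; so no case analysis on the operation is needed.

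The main point requiring a little care is making sure the chosen basis elements are genuinely distinct as vectors and that the computation respects \eqref{ma} exactly (in particular that the roles of $k$ and $l$ — the "third index of the left factor" and "first index of the right factor" — are the ones glued by $\delta_{kl}$). There is no real obstacle here; the statement is essentially structural, and the only subtlety is cosmetic: one could equivalently invoke the isomorphism of $\mathfrak C_a$ with the tensor product of the full matrix algebra $M_m(F)$ and the semigroup algebra $F[I,a]$ mentioned in the remark after Theorem~\ref{t1}, since $M_m(F)$ is noncommutative for $m\geq 2$, but the direct computation above is shorter and self-contained, so that is the route I would present.
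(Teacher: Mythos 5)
Your proposal is correct and follows essentially the same route as the paper: commutativity for $m=1$ is immediate from one-dimensionality, and for $m\geq 2$ one exhibits two basis matrices whose product is a nonzero basis element in one order but $0$ in the other because the Kronecker factor $\delta_{kl}$ links the third index of the left factor with the first index of the right factor (the paper does this with general indices $E_{ijk}$, $E_{knr}$, $i\neq r$; you do it with the concrete choice $E_{111}$, $E_{122}$). No gaps.
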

\begin{proof} In case $m=1$ the algebra is commutative because it is one dimensional, i.e. only with one basis element $E_{111}$.
For $m\geq 2$ taking $k=l$ and $i\ne r$ from \eqref{ma} we get
\[E_{ijk}*_aE_{knr}=E_{ia(j,n)r}\ne 0=E_{knr}*_aE_{ijk}.\]
\end{proof}

An element (cubic matrix) $A$ of $\mathfrak C_a$ is called a \emph{left} (resp. \emph{right}) \emph{zero divisor} if there exists a nonzero $X\in \mathfrak C_a$
 such that $A*_aX = 0$ (resp.  $X*_aA = 0$).

 For a cubic matrix $A=(a_{ijk})\in \mathfrak C_a$ denote
 \[A_{ik}=\sum_{j=1}^m a_{ijk}, \ \ i,j\in I,\]
 and by $\mathbb B$ we denote the square matrix $\mathbb B=(A_{ik})_{i,k=1}^m$, which is called accompanying matrix of the cubic matrix $A$ (see \cite{Mak}).

 \begin{pro}
  Let $\mathfrak C_a$ be ACM over the
 field of real numbers.
 \begin{itemize}
 \item[(i)]  If $a\in \mathcal O_m$ is a right-symmetric (resp. left-symmetric)
 operation then a cubic matrix $A$ is a left (resp. right) zero divisor iff $\det \mathbb B=0$.

 \item[(ii)]  If $a\in \mathcal O_m$ is a left-symmetric (resp. right-symmetric)
 operation then any cubic matrix $A$ is a left (resp. right) zero divisor.
 \end{itemize}
   \end{pro}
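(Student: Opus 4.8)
The plan is to compute the product $A*_a X$ explicitly in coordinates for the two symmetric operations and read off when a nonzero solution $X$ exists. Recall from \eqref{AB} that if $A=(a_{ijk})$ and $X=(x_{ijk})$ then $A*_a X=(c_{ijr})$ with
\[
c_{ijr}=\sum_{l,n:\,a(l,n)=j}\sum_k a_{ilk}x_{nkr}.
\]
For the right-symmetric operation \eqref{rs}, $a(l,n)=n$, so the outer condition $a(l,n)=j$ forces $n=j$ while $l$ ranges freely; hence
\[
c_{ijr}=\sum_{l}\sum_k a_{ilk}x_{jkr}=\sum_k\Bigl(\sum_l a_{ilk}\Bigr)x_{jkr}=\sum_k \mathbb B_{ik}\,x_{jkr},
\]
where $\mathbb B=(A_{ik})=(\sum_l a_{ilk})$ is the accompanying matrix of $A$. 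So, for each fixed pair $(j,r)$, the equation $c_{ijr}=0$ for all $i$ says precisely that the vector $(x_{j1r},\dots,x_{jmr})^{\mathsf T}$ lies in $\ker\mathbb B$. Thus $A*_aX=0$ has a nonzero solution $X$ iff $\ker\mathbb B\neq\{0\}$, i.e. iff $\det\mathbb B=0$. This proves the "left"/right-symmetric half of (i); the left-symmetric/right zero divisor statement follows by the completely symmetric computation — for \eqref{ls}, $a(l,n)=l$, so in $X*_aA=(c_{ijr})$ the condition $a(l,n)=j$ forces $l=j$ and $n$ free, giving $c_{ijr}=\sum_k x_{ijk}\bigl(\sum_n a_{nkr}\bigr)=\sum_k x_{ijk}\,\mathbb B'_{kr}$ for the suitable row/column sum $\mathbb B'$; here one must take a small moment to check that the matrix whose determinant controls solvability is again the accompanying matrix $\mathbb B$ of $A$ (row sums versus column sums), which is the one genuinely fiddly bookkeeping point.

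For part (ii), the point is the opposite pairing of the operation with the side of multiplication. If $a$ is left-symmetric, $a(l,n)=l$, and we look at $A*_aX$: the condition $a(l,n)=j$ now forces $l=j$ while $n$ is free, so
\[
c_{ijr}=\sum_{n}\sum_k a_{ijk}x_{nkr}=\sum_k a_{ijk}\Bigl(\sum_n x_{nkr}\Bigr).
\]
Now the trick is to choose $X\neq 0$ making every inner sum $\sum_n x_{nkr}$ vanish while $X$ itself is nonzero: since $m\geq 2$, we may take e.g. $X=E_{1kr}-E_{2kr}$ for any fixed $k,r$, for which $\sum_n x_{nkr}=0$ and hence $c_{ijr}=0$ for all $i,j,r$. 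Thus every $A$ is a left zero divisor. The right-symmetric/right statement is again obtained by the mirror computation using $a(l,n)=n$ in $X*_aA$ and choosing $X$ with vanishing column-type sums. Note this argument needs $m\geq 2$, which is the implicit hypothesis throughout (for $m=1$ the algebra is a field).

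The main obstacle I anticipate is purely organizational rather than conceptual: keeping the index gymnastics straight when passing between "left vs. right multiplication" and "left-symmetric vs. right-symmetric operation," since these interact so that in (i) the side of multiplication is the \emph{opposite} of the symmetry type needed to produce the accompanying matrix, while in (ii) it is the \emph{same} side. I would therefore present the right-symmetric case of (i) and the left-symmetric case of (ii) in full detail, carefully identifying in each the exact matrix (the accompanying matrix $\mathbb B$ of $A$) whose kernel governs the existence of a nonzero $X$, and then state that the remaining two cases follow by the evident left-right symmetry of the definitions (swapping the roles of the first and second index slots, and of \eqref{rs} and \eqref{ls}). A secondary point to state carefully is that in (i) one direction is linear algebra over $\R$ (a square matrix is singular iff it has nontrivial kernel), which is why the hypothesis that the field is $\R$ — or any field — suffices; no special property of $\R$ beyond being a field is actually used.
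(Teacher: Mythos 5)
Your overall strategy is exactly the paper's: specialize \eqref{AB} to the two symmetric operations, reduce (i) to a linear system governed by the accompanying matrix $\mathbb B$, and in (ii) make the inner sums vanish by a suitable nonzero $X$. However, you have mis-transcribed \eqref{AB}: the paper's product is $c_{ijr}=\sum_{l,n:\,a(l,n)=j}\sum_k a_{ilk}x_{knr}$, so the contraction index $k$ sits in the \emph{first} slot of the second factor, not the second slot as in your $x_{nkr}$. In part (i) this slip is harmless: with the correct formula the right-symmetric case gives $\sum_k A_{ik}x_{kjr}=0$ for all $i,j,r$, so for each fixed $(j,r)$ the vector $(x_{kjr})_k$ lies in $\ker\mathbb B$, and your conclusion ($\det\mathbb B=0$ iff a nonzero $X$ exists) is unchanged. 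It also dissolves the ``row sums versus column sums'' worry you flag in the resp.\ case: for left-symmetric $a$ and $X*_aA=0$ the correct formula yields $\sum_k x_{ijk}A_{kr}=0$ with $A_{kr}=\sum_n a_{knr}$, which is precisely the accompanying matrix entry (sum over the \emph{middle} index of $A$), so the controlling matrix is $\mathbb B$ itself acting on the left, and nontriviality of its left kernel is again equivalent to $\det\mathbb B=0$; there is no transpose issue to check.

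In part (ii) the slip does matter: with the paper's rule and left-symmetric $a$ one gets $c_{ijr}=\sum_k a_{ijk}\bigl(\sum_n x_{knr}\bigr)$, so you must kill the sums over the \emph{middle} index of $X$. Your witness $X=E_{1k_0r_0}-E_{2k_0r_0}$ does not do this (for it, $\sum_n x_{1nr_0}=1\neq 0$, and $c_{ijr_0}=a_{ij1}-a_{ij2}$ is generically nonzero), so as written it does not prove $A$ is a left zero divisor. The repair is the obvious one and is what the paper does: choose $X$ with $\sum_n x_{knr}=0$ for all $k,r$, e.g.\ $X=E_{k_01r_0}-E_{k_02r_0}$ (differing in the middle index), which requires $m\geq 2$ as you correctly note. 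With that one index fixed, your argument coincides with the paper's proof, including the observation that only the field axioms (a square matrix is singular iff it has a nontrivial kernel) are used in (i).
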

 \begin{proof} (i) For the right-symmetric $a$ we have $a(l,n)=n$ for all $l,n$.
 Consequently, the left zero divisibility equation $A*_aX=0$ can be written as (see \eqref{AB})
  \[
 \sum_{l,n: \, a(l,n)=j}\sum_k a_{ilk}x_{knr}=\sum_{l=1}^m\sum_{k=1}^m a_{ilk}x_{kjr}=0, \qquad \text{for all} \ \ i,j,r.
 \]
 Consequently,
\[ \sum_{k=1}^m A_{ik}x_{kjr}=0,  \qquad \text{for all} \ \ i,j,r.\]
Thus $x_{kjr}\ne 0$ iff $\det \mathbb B=0$.
The case of left-symmetric is similar.

(ii)  For the left-symmetric $a$ we have $a(l,n)=l$ for all $l,n$.
 Consequently, the left zero divisibility equation $A*_aX=0$ can be written as (see \eqref{AB})
  \[
 \sum_{l,n: \, a(l,n)=j}\sum_k a_{ilk}x_{knr}=\sum_{n=1}^m\sum_{k=1}^m a_{ijk}x_{knr}=0,  \qquad \text{for all} \ \ i,j,r.
 \]
 Consequently,
\[ \sum_{k=1}^m a_{ijk}X_{kr}=0, \qquad  \text{for all} \ \ i,j,r,\]
where
\[X_{kr}=\sum_{n=1}^mx_{knr}.\]
Hence independently on values $a_{ijk}$ one can choose $x_{knr}$ (some of them should be non-zero)
such that $X_{kr}=0$ for any $k,r$.
The case of right-symmetric is similar.
\end{proof}

 \section{Subalgebras of ACM} \label{S:subal}

A subalgebra of an algebra is a subset of elements that is closed under addition, multiplication, and scalar multiplication.
A left (resp. right) ideal of an algebra is a linear subspace that has the property that any element of the subspace multiplied on the left (resp. right) by any element of the algebra produces an element of the subspace. A two-sided ideal is a subset that is both a left and a right ideal. The term ideal on its own is usually taken to mean a two-sided ideal.

In the following theorem we collect several subalgebras of ACM.

\begin{thm}\label{t3} Let $\mathfrak C_a$, $a\in \mathcal O_m$, be an ACM with basis $\{E_{ijk}: i,j,k\in I\}$.
\begin{itemize}

\item[1.] Let $J_a=\{a(i,j): i,j\in I\}$ be the image of $a\in \mathcal O_m$.
If $\tilde J\subset J_a$ is an $a$-invariant (i.e. $a(\tilde J,\tilde J)\subset \tilde J$)
then for each fixed $i,k$, the set $\mathcal E^{ik}_{\tilde J}=\{E_{ijk}: j\in \tilde J\}$ generates a subalgebra, denoted by
$\mathfrak C_{a,\tilde J}^{(ik)}$.

\item[2.] If $i\ne \tilde i$ or $k\ne \tilde k$, then $\mathfrak C_{a,\tilde J}^{(ik)}\cap \mathfrak C_{a,\tilde J}^{(\tilde i\tilde k)}=\{0\}$, here $0$ is the zero-cubic matrix.

\item[3.] If $\tilde J$ and $\bar J$ are $a$-invariant sets such that $\tilde J\subset \bar J$ then
 $\mathfrak C_{a,\tilde J}^{(ik)}\subset \mathfrak C_{a,\bar J}^{(ik)}$.

\item[4.] If $\tilde J$ and $\bar J$ are $a$-invariant sets such that $\tilde J\cap \bar J=\emptyset$ then
 $\mathfrak C_{a,\tilde J}^{(ik)}\cap \mathfrak C_{a,\bar J}^{(ik)}=\{0\}$.

\item[5.] The set $\{E_{ijk}: i, k\in I, \, j\in J_a\}$ generates an ideal, denoted by  ${\mathfrak I}_{a}$.

\item[6.] The set ${\mathfrak I}_{a}^0=\{X=(x_{inj})\in \mathfrak C_a: \sum_{n}x_{inj}=0,  \ \ \text{for all} \ \ i,j\in I\}$ is an ideal.
\end{itemize}
\end{thm}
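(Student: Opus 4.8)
The plan is to verify each of the six assertions in turn by direct computation with the multiplication rule \eqref{ma} (or its bilinear extension \eqref{AB}), exploiting the fact that under $*_a$ the first and third indices of a basis matrix $E_{ijk}$ behave like the indices of an ordinary square matrix while the middle index is governed by the semigroup operation $a$. First I would record the following elementary fact: from \eqref{ma}, $E_{ijk}*_aE_{i'j'k'}=\delta_{ki'}E_{i\,a(j,j')\,k'}$, so a product of two basis matrices is nonzero precisely when the third index of the first equals the first index of the second, and in that case the first index of the product is $i$, the third is $k'$, and the middle index lies in $J_a$. This observation drives essentially everything.

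For item 1, take two generators $E_{ijk},E_{ij'k}$ of $\mathcal E^{ik}_{\tilde J}$ with $j,j'\in\tilde J$. Their product is $\delta_{ki}E_{i\,a(j,j')\,k}$: if $k\ne i$ it is $0$, and if $k=i$ it is $E_{i\,a(j,j')\,k}$, which lies in $\mathcal E^{ik}_{\tilde J}$ because $\tilde J$ is $a$-invariant, so $a(j,j')\in\tilde J$. Hence the span of $\mathcal E^{ik}_{\tilde J}$ is closed under $*_a$, and since it is by definition a linear subspace it is a subalgebra. Items 2 and 4 are immediate from linear independence of the $E_{ijk}$: a nonzero element of $\mathfrak C_{a,\tilde J}^{(ik)}$ is a combination of basis matrices all of whose outer indices are $(i,k)$ and all of whose middle indices lie in $\tilde J$; for such an element to lie in $\mathfrak C_{a,\tilde J}^{(\tilde i\tilde k)}$ (item 2) we would need $(i,k)=(\tilde i,\tilde k)$, and for it to lie in $\mathfrak C_{a,\bar J}^{(ik)}$ with $\tilde J\cap\bar J=\emptyset$ (item 4) we would need every middle index to lie in $\tilde J\cap\bar J=\emptyset$. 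Item 3 is equally transparent: if $\tilde J\subset\bar J$ then every generator $E_{ijk}$, $j\in\tilde J$, of $\mathfrak C_{a,\tilde J}^{(ik)}$ is already a generator of $\mathfrak C_{a,\bar J}^{(ik)}$, so the former subalgebra is contained in the latter.

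For item 5, let $\mathfrak I_a$ be the span of $\{E_{ijk}:i,k\in I,\ j\in J_a\}$. To see it is a two-sided ideal it suffices to check products of a generic basis matrix $E_{pqr}$ with a generator $E_{ijk}$ ($j\in J_a$) on both sides: $E_{pqr}*_aE_{ijk}=\delta_{ri}E_{p\,a(q,j)\,k}$ and $E_{ijk}*_aE_{pqr}=\delta_{kp}E_{i\,a(j,q)\,r}$; in each case the result is either $0$ or a basis matrix whose middle index ($a(q,j)$ or $a(j,q)$) lies in $J_a$, hence lies in $\mathfrak I_a$. Extending by bilinearity gives $\mathfrak C_a*_a\mathfrak I_a\subset\mathfrak I_a$ and $\mathfrak I_a*_a\mathfrak C_a\subset\mathfrak I_a$. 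For item 6 I would use the accompanying homomorphism $\varphi\colon\mathfrak C_a\to\mathfrak A$ of Theorem~\ref{ta}: by its explicit formula $\varphi(X)=\sum_{ij}\big(\sum_n x_{inj}\big)\eta_{ij}$, the set $\mathfrak I_a^0$ is exactly $\ker\varphi$, and the kernel of an algebra homomorphism is always a two-sided ideal. (Alternatively one checks directly that if $\sum_n x_{inj}=0$ for all $i,j$ then the same holds for any product $X*_aB$ and $B*_aX$, using \eqref{AB} and reorganizing the sum over the fibre of $a$.) I do not anticipate a genuine obstacle anywhere; the only point requiring a little care is item 5, where one must make sure the ideal property is checked on both the left and the right and that the $a$-invariance needed is just $J_a$ being the whole image (so $a(\cdot,\cdot)\in J_a$ automatically), in contrast to the proper-subset situation of item 1 where genuine $a$-invariance of $\tilde J$ is essential.
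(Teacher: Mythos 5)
Your proposal is correct and follows essentially the same route as the paper: item 1 by multiplying basis generators and using $a$-invariance, items 2--4 by linear independence of the $E_{ijk}$ (which the paper dismisses as straightforward), item 5 by observing that the middle index of any product automatically lies in $J_a$ (the paper phrases this via the coordinate formula \eqref{AB}, you via basis matrices -- the same computation), and item 6 by identifying $\mathfrak I_a^0$ with $\ker\varphi$ for the homomorphism of Theorem~\ref{ta}. No gaps.
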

\begin{proof} 1. For any two basis elements $E_{ijk}, E_{ink}\in \mathcal E^{ik}_{\tilde J}$,  we have
\[ E_{ijk}*_a E_{ink}=\delta_{ki}E_{ia(j,n)k}\in \mathcal E^{ik}_{\tilde J},\]
because $a(j,n)\in \tilde J$. Thus the set $\mathcal E^{ik}_{\tilde J}$ generates a subalgebra.
Note that the subalgebra $\mathfrak C_{a,\tilde J}^{(ik)}$
is with zero-multiplication iff $i\ne k$.

Items 2--4 are straightforward.

5. It is clear that $\mathfrak I_a$ is a subalgebra. First we prove that
it is a right ideal, i.e. for arbitrary $A\in \mathfrak C_a$ and
$B\in \mathfrak I_a$ one has $A*_aB\in \mathfrak I_a$.
Let $A=\sum_{ijk}a_{ijk}E_{ijk}$ and $B=\sum_{ink: n\in J_a}b_{ink}E_{ink}$.
We have
\[(A*_aB)_{ijr}=\sum_{l,n: \, a(l,n)=j}\sum_k a_{ilk}b_{knr},\]
i.e. $(A*_aB)_{ijr}=0$ if $j\notin J_a$. Consequently, $A*_aB\in \mathfrak I_a$.
Similarly one can see that $B*_aA\in \mathfrak I_a$. Thus $\mathfrak I_a$ is an ideal.

6. We note that  $\mathfrak I_a^0=\ker \varphi=\{X\in \mathfrak C_a: \varphi(X)=0\}$, where $\varphi$ is constructed in the proof of Theorem~\ref{ta}. Therefore it is an ideal.
\end{proof}

For $i\in I$ define the sequence
\begin{equation}\label{in}
i_0=i, \qquad  i_n=a(i_{n-1},i_{n-1}),  \quad n\geq 1.
\end{equation}
Since $I$ is a finite set the sequence $i_n$ may have one of the following forms:
\begin{itemize}
\item[a.] \emph{periodic}: there is a number $p\geq 1$ such that
$i_p=i$. Denote by $p_a(i)\geq 1$, the minimal period of $i$ and $I_a(i)=\{i_0=i, i_1, \dots, i_{p_a(i)-1}\}$.

\item[b.] \emph{convergent}: there is a number $q\geq 1$ such that $i_n=i_q$ for any $n\geq q$.
\end{itemize}

We note that $I_a(i)$ is not $a$-invariant, in general. This can be seen in the following example.

\begin{ex} Take $a\in \mathcal O_3$ as
 \[\begin{array}{ccccc}
a& \vline&1&2&3\\
\hline
1&\vline&1&2&3\\
2&\vline&2&3&1\\
3&\vline&3&1&2
\end{array}\]

Then $I_a(1)=\{1\}$, $I_a(2)=\{2,3\}$ and $I_a(3)=\{3,2\}$. We note that $I_a(1)$ is $a$-invariant, but
$I_a(2)$ is not $a$-invariant, because $a(2,3)=1\notin I_a(2)$.
\end{ex}

Define \emph{plenary powers} of an element $A\in\mathfrak C_a$ as follows:
\begin{align*}
A^{[1]} & =A^{(2)} = A *_a  A, \\
A^{[2]} & =A^{(2^2)} = A^{(2)} *_a  A^{(2)},\\
{} &  \quad \dots\ \ \dots \ \ \dots \\
A^{[n]} & =A^{(2^n)} = A^{(2^{n-1})} *_a  A^{(2^{n-1})}.
\end{align*}
For convenience, we denote $A^{[0]} =A$.

Fix a subset $K\subset I$ and construct the following sequens of sets:
\begin{align*}
J_{a,0}(K) &=K,\\
 J_{a,1}(K) & =J_{a,0}(K)\cup\{a(s,t): s,t\in J_{a,0}(K)\}, \\
{} &  \quad  \dots\ \ \dots \ \ \dots \\
J_{a,n}(K)& =J_{a,n-1}(K)\cup\{a(s,t): s,t\in J_{a,n-1}(K)\}.
\end{align*}
Since $I$ is finite there is $n_0=n_0(K)$ such that
\[J_{a,n}(K)=J_{a,n_0}(K), \ \text{for all} \    n\geq n_0.\]

\begin{pro}\hfill
\begin{itemize}
  \item[1.] For each $i,j\in I$, the sequence $\{E_{jij}^{[n]}\}$ is
periodic (resp. convergent) iff $i_n$ constructed for $i$ by \eqref{in}
is periodic (resp. convergent).
  \item[2.] For each $k,n\in I$, $K\subset I$,  the set $\{E_{kln}: l\in J_{a,n_0(K)}(K)\}$ generates a subalgebra.
\end{itemize}
\end{pro}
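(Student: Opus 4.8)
The plan is to analyze the two items essentially independently, since part~1 concerns a one-parameter family of specific basis-type elements and reduces to a direct computation with the multiplication rule \eqref{ma}, while part~2 is an $a$-invariance argument that parallels the reasoning already used for item~1 of Theorem~\ref{t3}.

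For part~1, the first step is to compute $E_{jij}*_a E_{jij}$ using \eqref{ma}: since the first lower index of the left factor equals the last lower index of the right factor (both are $j$), we get $\delta_{jj}E_{j\,a(i,i)\,j}=E_{j\,a(i,i)\,j}$. Thus the ``plenary square'' of $E_{jij}$ is again an element of the same shape, with the middle index advanced from $i$ to $a(i,i)=i_1$. Iterating, an easy induction shows $E_{jij}^{[n]}=E_{j\,i_n\,j}$, where $i_n$ is exactly the sequence \eqref{in} built for $i$. From this identity the equivalence is immediate: $E_{jij}^{[n]}$ is periodic (with period $p$) if and only if $E_{j\,i_n\,j}=E_{j\,i_0\,j}$ for the appropriate $n$, which happens precisely when $i_n=i$ in $I$ (since the map $l\mapsto E_{jlj}$ is injective on basis indices); similarly for convergence, $E_{jij}^{[n]}=E_{jij}^{[q]}$ for all $n\ge q$ iff $i_n=i_q$ for all $n\ge q$. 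I would write out the induction in one line and then state the two equivalences.

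For part~2, the first observation is that the set $L:=J_{a,n_0(K)}(K)$ is $a$-invariant: by construction $J_{a,n}(K)\subset J_{a,n+1}(K)$ and $\{a(s,t):s,t\in J_{a,n}(K)\}\subset J_{a,n+1}(K)$, so once the chain stabilizes at $n_0$ we have $a(L,L)\subset J_{a,n_0+1}(K)=L$. Now fix $k,n\in I$ and consider the linear span of $\{E_{kln}:l\in L\}$. For two such basis elements, \eqref{ma} gives $E_{kln}*_a E_{kl'n}=\delta_{nk}E_{k\,a(l,l')\,n}$; since $a(l,l')\in L$ by invariance, the product is either $0$ or again of the form $E_{k\,l''\,n}$ with $l''\in L$. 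Hence the span is closed under $*_a$, and it is trivially closed under addition and scalar multiplication, so it is a subalgebra. (As in the remark after item~1 of Theorem~\ref{t3}, this subalgebra has zero multiplication unless $n=k$.)

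I do not anticipate a genuine obstacle here; both parts are bookkeeping once the key identity $E_{jij}^{[n]}=E_{j\,i_n\,j}$ and the stabilized $a$-invariance of $J_{a,n_0(K)}(K)$ are in hand. The only point requiring a modicum of care is keeping the index conventions of \eqref{ma} straight — which lower index of which factor must match for a nonzero product — and noting that the injectivity of $l\mapsto E_{jlj}$ is what lets one transfer periodicity/convergence between the basis-element sequence and the integer sequence \eqref{in}. The mild subtlety that $I_a(i)$ need not be $a$-invariant (illustrated in the preceding example) is not an issue for part~2, because there we use the saturated set $J_{a,n_0(K)}(K)$ rather than the plain orbit $I_a(i)$.
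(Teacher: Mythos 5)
Your proposal is correct and follows essentially the same route as the paper: the induction giving $E_{jij}^{[n]}=E_{j\,i_n\,j}$ for part~1, and for part~2 the $a$-invariance of the stabilized set $J_{a,n_0(K)}(K)$ together with the closure computation (which the paper obtains by citing item~1 of Theorem~\ref{t3} rather than redoing it). One harmless slip: in \eqref{ma} the index that must match is the \emph{third} index of the left factor with the \emph{first} index of the right factor (as you correctly use via $\delta_{nk}$ in part~2), not the other way around as stated in your computation of $E_{jij}*_aE_{jij}$; since both indices there equal $j$, the conclusion is unaffected.
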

\begin{proof}
1. It is easy to see that $E_{jij}^{[n]}=E_{ji_nj}$,  $i,j\in I$.
Thus sequences $E_{jij}^{[n]}$ and $i_n$ have the same behavior.

2. By construction of $ J_{a,n_0(K)}(K) $ it is clear that this set is an $a$-invariant. Therefore the part 1 of Theorem~\ref{t3} completes the proof.
\end{proof}

For $K=\{i\}$ the following example shows that not each $a$-invariant set of $a$ has the form $J_{a,n_0(i)}(i)$.

\begin{ex} Consider the following operation $a\in\mathcal O_3$:
 \[\begin{array}{ccccc}
a& \vline&1&2&3\\
\hline
1&\vline&1&1&1\\
2&\vline&1&2&2\\
3&\vline&1&3&3\\
\end{array}\]
Then $J_{a,n_0(i)}(i)=\{i\}$, $i\in I=\{1,2,3\}$. But the set $\{i,j\}$, for $i\ne j\in \{1,2,3\}$, and $I$
are also $a$-invariant. Thus any subset of $I$ is an $a$-invariant. By above mentioned results the ACM corresponding to this example of operation has at least 7 subalgebras.
\end{ex}
\section*{ Acknowledgements}

 This work was partially supported by Ministerio de Econom\'{\i}a y Competitividad (Spain), grant MTM2013-43687-P
  (European FEDER support included), by Xunta de Galicia, grant GRC2013-045 (European FEDER support
included) and by Kazakhstan Ministry of Education and Science, grant 0828/GF4: ``Algebras, close to Lie:
cohomologies, identities and deformations''. We thank the referee for careful reading of
the manuscript and for a number of useful suggestions which have led to improvement of
the paper.


\end{document}